\documentclass[journal]{IEEEtran}

\usepackage{stfloats}
\usepackage{amsthm}
\usepackage{aliascnt}
\usepackage{multirow}
\usepackage{flushend}
\usepackage{mathtools}
\usepackage{siunitx}
\usepackage{colortbl}
\usepackage[noadjust]{cite}
\theoremstyle{plain}
\newtheorem{theorem}{Theorem}[section]
\newaliascnt{proposition}{theorem}

\aliascntresetthe{proposition}

\newaliascnt{corollary}{theorem}
\newtheorem{corollary}[corollary]{Corollary}
\aliascntresetthe{corollary}

\newaliascnt{lemma}{theorem}
\newtheorem{lemma}[lemma]{Lemma}
\aliascntresetthe{lemma}

\theoremstyle{definition}
\newaliascnt{definition}{theorem}
\newtheorem{definition}[definition]{Definition}
\aliascntresetthe{definition}

\newaliascnt{exercise}{theorem}

\aliascntresetthe{exercise}

\theoremstyle{remark}
\newaliascnt{example}{theorem}

\aliascntresetthe{example}

\newaliascnt{remark}{theorem}
\newtheorem{remark}[remark]{Remark}
\aliascntresetthe{remark}

\usepackage{tikz-cd}
\usepackage{macros}
\usepackage{hyperref}
\usepackage{float}
\usepackage[capitalise,nameinlink]{cleveref}

\hypersetup{bookmarksopen=true}

\usepackage{crossreftools}
\pdfstringdefDisableCommands{%
    \let\Cref\crtCref
    \let\cref\crtcref
}

\crefformat{equation}{\textup{#2(#1)#3}}
\crefrangeformat{equation}{\textup{#3(#1)#4--#5(#2)#6}}
\crefmultiformat{equation}{\textup{#2(#1)#3}}{ and \textup{#2(#1)#3}}
{, \textup{#2(#1)#3}}{, and \textup{#2(#1)#3}}
\crefrangemultiformat{equation}{\textup{#3(#1)#4--#5(#2)#6}}%
{ and \textup{#3(#1)#4--#5(#2)#6}}{, \textup{#3(#1)#4--#5(#2)#6}}{, and \textup{#3(#1)#4--#5(#2)#6}}

\Crefformat{equation}{#2Equation~\textup{(#1)}#3}
\Crefrangeformat{equation}{Equations~\textup{#3(#1)#4--#5(#2)#6}}
\Crefmultiformat{equation}{Equations~\textup{#2(#1)#3}}{ and \textup{#2(#1)#3}}
{, \textup{#2(#1)#3}}{, and \textup{#2(#1)#3}}
\Crefrangemultiformat{equation}{Equations~\textup{#3(#1)#4--#5(#2)#6}}%
{ and \textup{#3(#1)#4--#5(#2)#6}}{, \textup{#3(#1)#4--#5(#2)#6}}{, and \textup{#3(#1)#4--#5(#2)#6}}


\crefname{theorem}{Theorem}{Theorems}
\crefname{proposition}{Proposition}{Propositions}
\crefname{corollary}{Corollary}{Corollaries}
\crefname{lemma}{Lemma}{Lemmas}
\crefname{definition}{Definition}{Definitions}
\crefname{remark}{Remark}{Remarks}

\usepackage[inline]{enumitem}
\usepackage{algorithm}
\title{Nonasymptotic Convergence Rates for Plug-and-Play Methods With MMSE Denoisers}

\author{Henry~Pritchard
        and~Rahul~Parhi,~\IEEEmembership{Member,~IEEE}\thanks{The authors acknowledge support from NVIDIA Corporation through the NVIDIA Academic Grant Program, including the donation of an RTX PRO 6000 Blackwell Max-Q Workstation Edition GPU used in this research. RP acknowledges support from a Hellman Fellowship from the University of California.

The authors are with the Department of Electrical and Computer Engineering, University of California, San Diego (e-mail: hepritchard@ucsd.edu; rahul@ucsd.edu).}}

\begin{document}
\maketitle

\bstctlcite{IEEEexample:BSTcontrol}

\begin{abstract}
    It is known that the minimum-mean-squared-error (MMSE) denoiser under Gaussian noise can be written as a proximal operator, which suffices for asymptotic convergence of plug-and-play (PnP) methods but does not reveal the structure of the induced regularizer or give convergence rates. We show that the MMSE denoiser corresponds to a regularizer that can be written explicitly as an upper Moreau envelope of the negative log-marginal density, which in turn implies that the regularizer is 1-weakly convex on its domain. Using this property, we derive (to the best of our knowledge) the first sublinear convergence guarantee for PnP proximal gradient descent with an MMSE denoiser. We validate the theory with a one-dimensional synthetic study that recovers the implicit regularizer. We further validate the theory with imaging experiments (deblurring and computed tomography), which exhibit the predicted sublinear behavior.
\end{abstract}

\section{Introduction}
In this work, we focus on the setting of \emph{linear inverse problems}, where $\overline{x} \in \mathbb{R}^n$ is the true signal, $y \in \mathbb{R}^m$ is the observed signal obtained by
\begin{equation}\label{eq:inverse_problem}
y = A\overline{x} + \epsilon, 
\end{equation}
for linear \emph{forward operator} $A \in \mathbb{R}^{m\times n}$ and measurement noise $\epsilon \sim \mathcal{N}(0,\sigma^2 I)$. The task of recovering $\overline{x}$ from $y$ is extensively studied in statistics and signal processing. A special case is the denoising problem, obtained when $A = I$: 
\begin{equation}\label{eq:denoising_problem}
    z = \overline{x}+\epsilon.
\end{equation}  
Although simpler, denoising is directly useful in solving general inverse problems, often as an intermediate step.

Recovering $\overline{x}$ from $y$ is often posed as the minimization of a regularized objective function
\begin{equation}\label{eq:loss_func}
 F(x) \coloneqq  f(x) + g(x),
\end{equation}
where $f$ is a data-fidelity term (typically $f(x) \coloneqq  \tfrac{1}{2}\|Ax-y\|^2$ under Gaussian noise), and $g$ is a regularization term that encodes prior beliefs about the \emph{class of admissible solutions}. For instance, the $\ell^1$-norm ($g(x) \propto \norm{x}_1$) promotes sparsity and underlies the theory of compressed sensing~\cite{candes_robust,donoho_compressed,bora_compressed}. 
Similarly, total variation (TV), defined as $g(x) \propto \TV(x) \coloneqq \sum_{i} |x_i - x_{i-1}|$ for $1$-D signals, favors piecewise-constant solutions and is widely used in image denoising~\cite{rudin_nonlinear,chambolle_algorithm,chambolle_first, liu_image,parhi2023sparsity}. Without regularization, the problem is often \emph{ill-posed}: Solutions may be non-unique, unstable, or nonexistent. Since the data-fidelity term is typically well-behaved, the regularizer largely determines the structure of solutions. More broadly, the regularizer largely dictates how optimization algorithms behave, so understanding its structure is crucial.

\subsection{Proximal Gradient Descent}
Gradient descent is a classical method for minimizing smooth (Lipschitz gradient) objective functions, with iterations
\begin{equation}
x_{k+1} = x_k - \gamma \nabla F(x_k).
\end{equation}
With appropriately chosen step size $\gamma > 0$, the iterates $x_k$ converge to a stationary point~\cite{kakade_dimension}. However, gradient descent is not well-suited for nonsmooth objectives.

Proximal gradient descent (PGD) addresses this limitation by splitting the objective function as in \eqref{eq:loss_func}. The data-fidelity term $f$ is typically smooth while the regularizer $g$ is often not (such as $\ell^1$ or $\TV$). At each iteration, PGD takes a gradient step on the data-fidelity term followed by a \emph{proximal step} for the regularizer, yielding iterations of the form
\begin{equation}\label{eq:pgd}
x_{k+1} = \operatorname{prox}_{\gamma g}
\left(x_k - \gamma \nabla f(x_k)\right).\end{equation}
Here $\operatorname{prox}_{\gamma g}$ denotes the proximal operator of $g$ (see \cref{def:prox}). Convergence of PGD has been widely studied in both convex and nonconvex settings and can be established under various assumptions on $f$ and $g$ with appropriately chosen step size $\gamma > 0$~\cite[Table~1]{li_accelerated}.

\subsection{Plug-and-Play Methods}
The \emph{plug-and-play} (PnP) framework~\cite{venkatakrishnan_plug} extends PGD by replacing the proximal step $\operatorname{prox}_{\gamma g}$ with a generic denoiser $D_{\sigma}$. This substitution is motivated by the interpretation of $\operatorname{prox}_{\gamma g}$ as the maximum a posteriori (MAP) solution to the denoising problem \eqref{eq:denoising_problem} under prior $p_X\propto \mathrm{e}^{-\gamma g}$~\cite[Eq.~(10.13)]{combettes_proximal}, the exact problem denoisers are designed to solve. The PnP-PGD iteration is given by
\begin{equation}\label{eq:pnp}
    x_{k+1} = D_\sigma
\left(x_k - \gamma \nabla f(x_k)\right),
\end{equation}
and has been shown to yield state-of-the-art results when combined with powerful image denoisers such as BM3D~\cite{dabov_image} or a deep neural network (DNN).

A central challenge in the PnP framework is that while a proximal operator is always tied to a well-defined regularizer, many of the most effective denoising methods are not. This substitution therefore breaks the direct connection to an explicit optimization problem, making theoretical interpretations and convergence claims difficult. Much of the literature imposes strict assumptions on the denoiser to obtain convergence guarantees. Many of the most effective denoisers do not satisfy these assumptions, yet still achieve excellent results~\cite{ahmad_plug}.

\begin{table*}[ht!]
 
\centering
\caption{Summary of related convergence guarantees for plug-and-play proximal gradient descent (PnP-PGD). Note $L_f$ is the Lipschitz constant for $\nabla f$, $L_{g_\sigma}$ is the Lipschitz constant for $\nabla g_\sigma$, $\gamma$ is the step size, and $K$ is the iteration count.
}
\label{tab:related_works}
\begin{tabular}{llll}
\toprule
\textbf{Work} & \textbf{Assumptions on data-fidelity term $f$} & \textbf{Assumptions on denoiser $D_\sigma$} & \textbf{Convergence rate} \\
\midrule
Classical PGD~\cite{beck_fast} & Convex, $L_f$-smooth & $D_\sigma = \operatorname{prox}_{g}$, where $g$ is convex & $\mathcal{O}(1/\sqrt{K})$\\
Lipschitz condition on the denoiser~\cite{ryu_plug} & $L_f$-smooth, $\mu$-strongly convex & $D_\sigma - I$ is $\varepsilon$-Lipschitz for $\varepsilon < \tfrac{2\mu}{L_f-\mu}$ & Asymptotic\\
Gradient-step denoisers~\cite{hurault2022gradient} & $L_f$-smooth, $\gamma L_f < \tfrac{L_{g_\sigma}+2}{L_{g_\sigma}+1}$ & $D_\sigma=I-\nabla g_\sigma$, where $L_{g_\sigma}<1$ & $\mathcal{O}(1/\sqrt{K})$ \\
Prior work with MMSE denoisers \cite{xu_provable} & $L_f$-smooth, $L_f<1$ & MMSE denoiser &  Asymptotic \\
\rowcolor{gray!20}
\textbf{This paper} (\cref{cor:conv}) & $L_f$-smooth, $L_f<1$ & MMSE denoiser & $\mathcal{O}(1/\sqrt{K})$ \\
\bottomrule
\end{tabular}
\end{table*}

\subsection{Related Works}
When $f$ and $g$ are convex with $L$-smooth $f$, it is well known that PGD converges to a fixed point for appropriate step size $\gamma$ \cite[Section 4.2]{parikh_prox}. This result can be proven via the machinery of monotone operator theory, the goal being to show that the composite PGD step \eqref{eq:pgd} constitutes an averaged operator, guaranteeing convergence~\cite{combettes_signal, combettes_solving, bauschke_convex}.\footnote{When $g$ is proper, lower-semicontinuous, convex, $\partial g$ is maximally monotone, making $\operatorname{prox}_{\gamma g} = (I+ \gamma\partial g)^{-1}$ firmly-nonexpansive, i.e., $1/2$-averaged. The gradient step is also averaged for $L$-smooth $f$, making their composition an averaged operator.}

In particular, convergence of PnP-PGD has been shown under tight assumptions on the denoiser, such as nonexpansiveness~\cite{sun_online} or suitable Lipschitz (e.g., residual-Lipschitz) bounds~\cite{ryu_plug}. Many works encourage these properties via, e.g., spectral normalization~\cite{ryu_plug} or Bj\"orck orthonormalization~\cite{anil2019sorting, li2019preventing}. In~\cite{pesquet_learning}, the authors propose a technique for learning a maximally monotone operator. Convergence of PnP has also been shown for specific denoisers, such as kernel denoisers~\cite{gavaskar_plug,athalye_contractivity}, gradient-step (GS) denoisers~\cite{hurault_proximal, hurault2022gradient}, bounded denoisers~\cite{chan_plug}, and linear denoisers~\cite{nair_fixed,gavaskar_plug_linear}.

A similar line of work learns regularizers explicitly~\cite{hertrich2025learning,ducotterd2025learning,ducotterd2025multivariate,goujon2023neural,pourya2025dealing,ongie2020deep, shumaylov_weakly, goujon_learning, parhi_deep}, rather than implicitly through the denoiser. This approach benefits from direct interpretability of the regularizer and can inherit convergence guarantees from classical variational theory. 

A complementary approach leverages the intrinsic \emph{statistical} structure of the denoiser. A particularly notable example---and the subject of this work---is the MMSE denoiser under Gaussian noise. With the Tweedie/Stein identities~\cite{efron_tweedies, stein_bound}, the MMSE denoiser can be tied to the score function, endowing it with useful structural properties~\cite{milanfar_denoising}. Further,~\cite{gribonval_reconciling} and~\cite{gribonval_should} showed that the MMSE denoiser is $C^\infty$ and can be written as the proximal map of a regularizer (which is $C^\infty$) under various forward and noise models, and derived an explicit formula for this regularizer~\eqref{eq:gribonval_explicit_mmse_prior}. This progress enabled the derivation of asymptotic convergence for both the ADMM~\cite{park_convergence} and PGD~\cite{xu_provable} variants of PnP using an MMSE denoiser. However, a gap in the theory remains: The explicit formula offers little insight into the behavior of the induced regularizer, and there are no \emph{nonasymptotic} convergence rates tailored to MMSE denoisers. \Cref{tab:related_works} highlights several closely related results on PnP-PGD, the assumptions they impose on the denoiser, and the convergence guarantees they establish.

\subsection{Contributions}
We make the following contributions.
\begin{itemize}
\item \textbf{Characterization of the implicit regularizer of the MMSE denoiser.}
We show in \cref{thm:mmse_prior} that the MMSE denoiser is the proximal map of a regularizer that admits an explicit form as an \emph{upper Moreau envelope}, i.e., $\phi_\mathrm{MMSE}(x)$ is, up to a constant,
\begin{equation}
\sigma^2\sup_{z}\left\{f_Z(z)-\tfrac{1}{2\sigma^2}\|z-x\|^2\right\},
\end{equation}
on its domain, where $f_Z=-\log p_Z$ is the negative log-marginal density from \cref{eq:denoising_problem}. This is a novel explicit form of the implicit regularizer in the MMSE denoiser.
\item \textbf{Nonasymptotic convergence of MMSE-PGD via weak convexity.}
In \cref{cor:wcvx}, we prove that $\phi_{\mathrm{MMSE}}$ is $1$-weakly convex on its domain. With this property, we establish (to the best of our knowledge) the first \emph{nonasymptotic} convergence guarantees for PnP-PGD with an MMSE denoiser in \cref{thm:PGD_MMSE_convergence} and \cref{cor:conv}. In particular, we show that, under an $L$-smooth data-fidelity term $f$ with $L<1$, both $\|\nabla F(x_k)\|$ and $\|x_{k+1}-x_k\|$ decrease sublinearly in terms of the best iterate.
\item \textbf{Experiments.} In \cref{Experiments}, we consider the following experimental setups: (i) a one-dimensional synthetic example validating the upper Moreau envelope form of the implicit regularizer for the MMSE denoiser, (ii) Gaussian deblurring on the MNIST dataset~\cite{lecun_mnist}, and (iii) computed tomography on the MayoCT dataset~\cite{mcollough_low}. In both imaging tasks we observe the predicted sublinear convergence.
\end{itemize}

\section{Preliminaries}

In this section, we collect some definitions and results used in the remainder of the paper. 

\begin{definition}[Lower Moreau Envelope]\label{def:lower_moreau}
   Let $f:\mathbb{R}^n\to \mathbb{R}$ be a function with $\gamma>0$. The \emph{lower Moreau envelope} of $f$ is given by 
    \begin{equation}
    M_{\gamma }f(x) \coloneqq  \inf_y \left\{f(y)+\tfrac{1}{2 \gamma}\|y-x\|^2 \right\}.\end{equation}
\end{definition}

\begin{definition}[Upper Moreau Envelope]\label{def:upper_moreau}
    Let $f:\mathbb{R}^n \to \mathbb{R}$ be a function with $\gamma > 0$. The \emph{upper Moreau envelope} of $f$ is given by
    \begin{equation}M^{\gamma}f(x)\coloneqq \sup_y \left\{f(y)-\tfrac{1}{2 \gamma}\|y-x\|^2\right\}.\end{equation}
\end{definition}
Note that the lower Moreau envelope is the usual Moreau envelope, but we make the distinction in this paper since we use both lower and upper Moreau envelopes. For the infimum defining the lower Moreau envelope to be achieved, $f$ must be lower-semicontinuous. An important property of the lower Moreau envelope $M_{\gamma}f$ is that it shares global minimizers with $f$ whenever $f$ is lower-semicontinuous, i.e. $\argmin_x f(x) = \argmin_{x} M_{\gamma}f(x).$
See~\cite{bernard_lasry} for further background.

\begin{definition}[Weak convexity]\label{def:weak}
Let $f:\mathbb{R}^n\to \R$, $\gamma>0$. $f$ is called \emph{$\gamma$-weakly convex} if the mapping $x \mapsto f(x)+\tfrac{\gamma}{2}\|x\|^2$ is convex. 
\end{definition}

Crucial to our analysis is the following lemma, adapted from~\cite[Lemma~3]{bernard_lasry}.
\begin{lemma}\label{lem:bernard3}
Let $f:\mathbb{R}^n \to (-\infty,\infty]$. Then, $M^\gamma M_\gamma f(x) \leq f(x)$ for all $x\in \mathbb{R}^n$ and $M^\gamma f$ is $\tfrac{1}{\gamma}$-weakly convex.
\end{lemma}
\begin{proof}
By definition,
\begin{align}
&M^\gamma M_\gamma f(x)  \nonumber \\
={}&\sup_y \inf_z \left\{f(z) 
     + \tfrac{1}{2\gamma}\|z-y\|^2 \nonumber
     - \tfrac{1}{2\gamma}\|y-x\|^2 \right\} \\
\leq{}& \sup_y f(x) = f(x).
\end{align}
Next, 
\begin{align}
    &M^\gamma f(x) + \tfrac{1}{2\gamma}\|x\|^2 \nonumber \\
    ={}& \sup_y\left\{ f(y) -\tfrac{1}{2\gamma}\|x-y\|^2 + \tfrac{1}{2\gamma}\|x\|^2\right\} \nonumber \\
    ={}& \sup_y\left\{ f(y)-\tfrac{1}{2\gamma}\|y\|^2 + \tfrac{1}{\gamma}\langle x,y\rangle\right\},
\end{align}
which is a supremum of affine functions, and hence convex. Therefore, $M^\gamma f$ is $\tfrac{1}{\gamma}$-weakly convex.
\end{proof}
 
We now turn to the proximal operator, a central tool in optimization. We shall rely on the fact that MMSE denoisers themselves admit a proximal representation.

\begin{definition}[Proximal operator]\label{def:prox}
Let $\gamma>0$, and $f:\mathbb{R}^n\to\mathbb{R}\cup \{\infty\}$ be proper and lower-semicontinuous.
The \emph{proximal operator} of $f$ with parameter $\gamma$ is the mapping
\begin{equation}\label{eq:prox}
\operatorname{prox}_{\gamma f}(x)\;\coloneqq \;\operatorname*{arg\,min}_{z\in \mathbb{R}^n}
\left\{f(z)+\tfrac{1}{2\gamma}\|z-x\|^2\right\}.
\end{equation}
\end{definition}
The proximal operator is generally set-valued and may be empty unless additional assumptions are imposed. 
For a proper, lower-semicontinuous function $f:\mathbb{R}^n\to \mathbb{R}\cup\{\infty\}$, the infimum defining the lower Moreau envelope is attained at every minimizer of the proximal problem; that is, 
\begin{equation}\label{eq:moreau_and_prox}
M_{\gamma} f(x) = f\left(z\right)+\tfrac{1}{2\gamma}\|x-z\|^2, \quad \text{for all } z \in \operatorname{prox}_{\gamma f}(x),
\end{equation}
for all $x \in \mathbb{R}^n$. When $f$ is a convex, lower-semicontinuous, and proper function, it is well known that $\operatorname{prox}_{\gamma f}$ and $M_{\gamma} f$ are also related by the following formula due to Moreau~\cite{moreau_proximite}: 
\begin{equation} 
\nabla M_{\gamma} f(x) = \tfrac{1}{\gamma}(x-\operatorname{prox}_{\gamma f}(x)).
\end{equation}

Although the proximal operator of a nonconvex function is generally set-valued, when $\operatorname{prox}_{\gamma f}$ is an MMSE denoiser, it is always single-valued and continuous, which is the setting of our main results. This motivates the following extension of the Moreau gradient identity. 

\begin{theorem}[Extension of the Moreau Gradient Identity]\label{thm:moreau_gradient}
  Let $f:\mathbb{R}^n\to \mathbb{R}\cup\{\infty\}$ be proper and lower-semicontinuous. If $\operatorname{prox}_{\gamma f}(x)$ is single-valued and continuous for all $x$, then the gradient of the lower Moreau envelope exists and takes the form
  \begin{equation}
      \nabla M_{\gamma} f(x) = \tfrac{1}{\gamma}(x-\operatorname{prox}_{\gamma f}(x)), \quad x \in \R^n.
  \end{equation}
\end{theorem}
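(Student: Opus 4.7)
The plan is to sandwich the increment $M_\gamma f(x+h) - M_\gamma f(x)$ between matching upper and lower bounds whose gap is $o(\|h\|)$, extracting the linear part as the claimed gradient. Writing $p(x)\coloneqq \operatorname{prox}_{\gamma f}(x)$ for the (now single-valued) proximal map, the starting point is \cref{eq:moreau_and_prox}, which gives $M_\gamma f(x) = f(p(x)) + \tfrac{1}{2\gamma}\|x-p(x)\|^2$.

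For the upper bound, I would use $z=p(x)$ as a (possibly suboptimal) test point in the infimum defining $M_\gamma f(x+h)$ and expand $\|x+h-p(x)\|^2 = \|x-p(x)\|^2 + 2\langle x-p(x),h\rangle + \|h\|^2$, yielding $M_\gamma f(x+h) \le M_\gamma f(x) + \tfrac{1}{\gamma}\langle x-p(x),h\rangle + \tfrac{1}{2\gamma}\|h\|^2$. For the matching lower bound, I would start from the identity $M_\gamma f(x+h) = f(p(x+h)) + \tfrac{1}{2\gamma}\|x+h-p(x+h)\|^2$, expand the squared norm symmetrically about $x$, and use that $p(x+h)$ is an admissible test point in the infimum at $x$, so that $f(p(x+h)) + \tfrac{1}{2\gamma}\|x-p(x+h)\|^2 \ge M_\gamma f(x)$. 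This gives $M_\gamma f(x+h) \ge M_\gamma f(x) + \tfrac{1}{\gamma}\langle x-p(x+h),h\rangle + \tfrac{1}{2\gamma}\|h\|^2$.

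Subtracting the candidate linear term $\tfrac{1}{\gamma}\langle x-p(x),h\rangle$ from both bounds, the residual gap between them collapses to $\tfrac{1}{\gamma}\langle p(x)-p(x+h),h\rangle$. Cauchy-Schwarz bounds the absolute value of this term by $\tfrac{1}{\gamma}\|p(x)-p(x+h)\|\,\|h\|$, which is $o(\|h\|)$ by the assumed continuity of $p$ at $x$. Together with the $\tfrac{1}{2\gamma}\|h\|^2$ residual on each side, this establishes Fr\'echet differentiability of $M_\gamma f$ at $x$ with $\nabla M_\gamma f(x) = \tfrac{1}{\gamma}(x-p(x))$. The main subtlety, and the only place the hypotheses enter nontrivially, is the lower bound: I rely on continuity rather than Lipschitzness of $p$, which is enough only because $\langle p(x)-p(x+h),h\rangle$ is already linear in $h$, so the bare convergence $\|p(x)-p(x+h)\|\to 0$ delivers the required $o(\|h\|)$ rate. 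Notably, no convexity of $f$ is ever invoked; single-valuedness and continuity of $\operatorname{prox}_{\gamma f}$ do all the work, which is exactly the generality needed for MMSE denoisers in the sequel.
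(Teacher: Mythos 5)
Your proof is correct and follows essentially the same sandwich argument as the paper: the upper bound uses $p(x)$ as a suboptimal test point in the infimum at $x+h$, the lower bound uses $p(x+h)$ as an admissible test point in the infimum at $x$, and continuity of the proximal map closes the $o(\|h\|)$ gap. The only (cosmetic, and slightly favorable) difference is that you work with general increments $h$ and obtain Fr\'echet differentiability directly, whereas the paper computes directional derivatives along rays $\bar{x}+tu$ via matching $\liminf$/$\limsup$ bounds.
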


\begin{proof}
    Let $h(x,z)\coloneqq  f(z)+\tfrac{1}{2\gamma}\|z-x\|^2$. We have for all $z \in \mathbb{R}^n$, $t\in \mathbb{R}$, 
    \begin{equation}\label{eq:2_7_1}
    h(x+tu,z) -h(x,z) = \tfrac{t}{\gamma}\langle x-z,u\rangle +\tfrac{t^2}{2\gamma}\|u\|^2.
\end{equation}
From the definition, $M_{\gamma} f(x) = \min_{z} h(x,z)$. Fix some $x_0 \in \mathbb{R}^n$  where there is a unique minimizer $z_0 = \operatorname{prox}_{\gamma f}(x_0)$ by assumption. Take some $u \in \mathbb{R}^n$. For $t>0$, there is some $z_t = \operatorname{prox}_{\gamma f}(x_0+ tu)$ so that
\begin{align}
&M_{\gamma} f(x_0+tu) \nonumber \\
={}& h(x_0+tu,z_t) \nonumber \\ 
={}& h(x_0,z_t) + \tfrac{t}{\gamma}\langle x_0-z_t,u\rangle+\tfrac{t^2}{2\gamma}\|u\|^2  \nonumber \\ 
\geq{}& M_{\gamma} f(x_0) +  \tfrac{t}{\gamma}\langle x_0-z_t,u\rangle+\tfrac{t^2}{2\gamma}\|u\|^2.
\end{align}
Subtracting and dividing through by $t$, we get \begin{align}\label{eq:2_7_2}
    & \liminf_{t\downarrow 0} \tfrac{M_{\gamma} f(x_0+tu)-M_{\gamma} f(x_0)}{t} \nonumber \\ \geq{}& \liminf_{t\downarrow 0} \left( \tfrac{1}{\gamma}\langle x_0-z_t,u\rangle+\tfrac{t}{2\gamma}\|u\|^2 \right)\nonumber  \\  ={}&\tfrac{1}{\gamma}\langle x_0-z_0,u\rangle,
\end{align}
where the last equality is by the assumption that $\operatorname{prox}_{\gamma f}$ is continuous. Next, 
\begin{align} & M_{\gamma} f(x_0+tu) \nonumber \leq h(x_0+tu, z_0) \\={}& M_{\gamma} f(x_0) + \tfrac{t}{\gamma}\langle x_0-z_0,u\rangle + \tfrac{t^2}{2\gamma}\|u\|^2.\end{align}
Again, subtracting and dividing through by $t$,
\begin{align}\label{eq:2_7_3}
    & \limsup_{t\downarrow 0} \tfrac{M_{\gamma} f(x_0+tu) -  M_{\gamma} f(x_0)}{t} \nonumber \\\leq{}& \limsup_{t \downarrow 0}\left(\tfrac{1}{\gamma} \langle x_0-z_0,u\rangle + \tfrac{t}{2\gamma}\|u\|^2 \right)\nonumber  \\ ={}& \tfrac{1}{\gamma} \langle x_0-z_0,u\rangle.
\end{align}
Combining \eqref{eq:2_7_2} and \eqref{eq:2_7_3}, we have that the directional derivative of $M_{\gamma} f$ at $x_0$ in direction $u$ satisfies $(M_{\gamma} f)'(x_0,u) = \tfrac{1}{\gamma}\langle x_0-z_0,u\rangle$. Linearity in $u$ gives us a two-sided directional derivative. Therefore, the gradient is
\begin{equation}
\nabla M_{\gamma} f(x_0) = \tfrac{1}{\gamma}(x_0-z_0) = \tfrac{1}{\gamma} \left(x_0-\operatorname{prox}_{\gamma f}(x_0)\right),
\end{equation}
which completes the proof.
\end{proof}
Many similar versions of this identity are well known in the literature, but are often formulated in terms of \textit{prox-regularity}. We included a simple self-contained proof for completeness. For more detail, see \cite[Theorem~13.37]{rockafellar_variational}.

We next review key results on MMSE denoisers and state their connection to proximal mappings, beginning with Tweedie’s formula.
\begin{theorem}[Tweedie's Formula]\label{thm:tweedies}
Let $X \sim p_X$, $\epsilon \sim \mathcal{N}(0,\sigma^2I )$ be independent random variables in $\R^n$, and let $Z=X+\epsilon$, $\psi_{\sigma}(z)\coloneqq \mathbb{E}[X|Z=z]$. Then, \begin{equation}\label{eq:tweedies}
    \psi_{\sigma}(z) = z+ \sigma^2 \nabla \log p_Z(z),
\end{equation}
where $p_Z= p_X * \mathcal{N}(0,\sigma^2I )$.
\end{theorem}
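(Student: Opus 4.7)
The plan is to derive the identity by differentiating the convolution representation of $p_Z$ under the integral sign and recognizing the resulting expression as a posterior expectation via Bayes' rule. I would begin by writing
\begin{equation}
p_Z(z) = \int_{\mathbb{R}^n} p_X(x)\, (2\pi\sigma^2)^{-n/2} \exp\!\left(-\tfrac{\|z-x\|^2}{2\sigma^2}\right) dx,
\end{equation}
which is just the convolution $p_X * \mathcal{N}(0,\sigma^2 I)$ spelled out.

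Next, I would compute $\nabla_z p_Z(z)$ by interchanging gradient and integral. Since the Gaussian kernel is $C^\infty$ in $z$ with $\nabla_z \exp(-\|z-x\|^2/(2\sigma^2)) = -\tfrac{z-x}{\sigma^2}\exp(-\|z-x\|^2/(2\sigma^2))$, differentiation under the integral gives
\begin{equation}
\nabla p_Z(z) = \tfrac{1}{\sigma^2}\int_{\mathbb{R}^n} (x-z)\, p_X(x)\, \mathcal{N}(z-x;0,\sigma^2 I)\, dx.
\end{equation}
Dividing by $p_Z(z)$ and using Bayes' rule $p_{X|Z}(x\mid z) = p_X(x)\,\mathcal{N}(z-x;0,\sigma^2 I)/p_Z(z)$, I recognize the right-hand side as a posterior expectation:
\begin{equation}
\nabla \log p_Z(z) = \tfrac{1}{\sigma^2}\int_{\mathbb{R}^n}(x-z)\, p_{X\mid Z}(x\mid z)\, dx = \tfrac{1}{\sigma^2}\bigl(\psi_\sigma(z)-z\bigr).
\end{equation}
Rearranging yields $\psi_\sigma(z) = z + \sigma^2 \nabla \log p_Z(z)$, as claimed.

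The only nontrivial step is justifying the differentiation under the integral. For this I would invoke the dominated convergence theorem: on any compact neighborhood of a fixed $z_0$, the integrand $(x-z)\exp(-\|z-x\|^2/(2\sigma^2))$ is dominated (uniformly in $z$) by a Gaussian in $x$ times a polynomial, and multiplying by the density $p_X$ preserves integrability since $p_X$ is a probability density. This is the standard regularity argument that makes $p_Z$ a smooth function, and it is essentially the only obstacle; everything else is algebraic manipulation.
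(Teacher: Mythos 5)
Your derivation is correct, and it is the standard proof of Tweedie's formula: differentiate the Gaussian convolution under the integral sign and recognize the posterior mean via Bayes' rule. The paper itself does not prove this statement --- it quotes it as a known result and points to Efron's survey --- so there is nothing to compare against beyond noting that your argument is the canonical one. One small refinement worth making: the paper explicitly remarks that $p_X$ need not be a density and ``may even include discrete atoms,'' whereas you write $p_Z$ as an integral of $p_X(x)\,dx$. The fix is cosmetic --- integrate against the prior measure $p_X(dx)$ instead; the smoothing Gaussian kernel still makes $p_Z$ a $C^\infty$ density, the domination is even simpler than you state (the map $t \mapsto |t|e^{-t^2/(2\sigma^2)}$ is bounded, so the differentiated integrand is dominated by a constant, which is integrable against any probability measure), and the Bayes step becomes a disintegration of the joint law of $(X,Z)$. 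With that adjustment the proof covers the full generality the theorem claims.
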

For a thorough introduction to Tweedie’s formula, we refer the reader to~\cite{efron_tweedies}. Note that Tweedie's formula makes no assumptions on the prior distribution $p_X$, which may even include discrete atoms. We recall the following result on MMSE denoisers due to Gribonval et al.~\cite{gribonval_reconciling,gribonval_should}.

\begin{theorem}[Main result from \cite{gribonval_should}~and~{\cite[Corollary~1]{gribonval_reconciling}}]\label{thm:gribonval_mmse_result}
Let $X \sim p_X$ and $\epsilon \sim \mathcal{N}(0,\sigma^2I )$ be independent random variables in $\R^n$, and let $Z=X+\epsilon$. Assume $p_X$ is non-degenerate.\footnote{i.e., there is no pair $v\in \R^n$, $c\in \R$ such that $\langle X,v\rangle=c$, almost surely.} The MMSE denoiser under \eqref{eq:denoising_problem} given by $\psi_\sigma(z) \coloneqq  \mathbb{E}[X|Z=z]$ has the following properties:\begin{enumerate} [label=(\roman*)]
    \item It is one-to-one from $\R^n$ onto its image.
    \item It is $C^\infty$, and so is its inverse.
    \item It can be written as $\psi_\sigma(z)=\operatorname{prox}_{\phi_{\mathrm{MMSE}}}(z)$, where
    \end{enumerate}
    \begin{align}
     &\phi_{\mathrm{MMSE}}(z)\coloneqq  \nonumber \\
    &\begin{cases}
    -\tfrac12\|\psi_\sigma^{-1}(z)-z\|^2 +\sigma^2f_Z\left(\psi_\sigma^{-1}(z)\right), & z \in \psi_\sigma(\mathbb{R}^n),\\
    \infty, & \text{otherwise},
    \end{cases}\label{eq:gribonval_explicit_mmse_prior}
    \end{align}
    \begin{enumerate}[label=(\roman*),resume]
    \item[]where $f_Z = -\log p_Z$.
    \item $\phi_{\mathrm{MMSE}}$ is $C^\infty$ on $\psi_{\sigma}(\mathbb{R}^n)$.
\end{enumerate}
\end{theorem}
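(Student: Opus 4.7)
The plan is to exploit Tweedie's identity (\cref{thm:tweedies}) to recognize $\psi_\sigma$ as the gradient of the scalar potential
\[
h(z) \coloneqq \tfrac{1}{2}\|z\|^2 + \sigma^2 \log p_Z(z),
\]
and then derive all four claims from convex-analytic properties of $h$. I would tackle the parts in the order (2), (1), (3), (4). For (2), the marginal $p_Z = p_X \ast \mathcal{N}(0,\sigma^2 I)$ is the convolution of a probability measure with a strictly positive $C^\infty$ Gaussian kernel; differentiation under the integral (Gaussian derivatives are integrable against any probability measure) yields $p_Z \in C^\infty(\mathbb{R}^n)$ with $p_Z(z)>0$ everywhere, so $\log p_Z \in C^\infty$ and hence $\psi_\sigma \in C^\infty$.

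For (1), differentiating Tweedie's formula gives $D\psi_\sigma(z) = I + \sigma^2 \nabla^2 \log p_Z(z)$. Using the ``second-order Tweedie'' identity $\operatorname{Cov}(X \mid Z=z) = \sigma^2 I + \sigma^4 \nabla^2 \log p_Z(z)$, this Jacobian equals $\sigma^{-2}\operatorname{Cov}(X \mid Z=z)$, which is positive semi-definite. To upgrade this to positive definiteness I would argue by contradiction: if $v^\top \operatorname{Cov}(X\mid Z=z)\, v = 0$ for some unit $v$, then $\langle X,v\rangle$ is a.s.\ constant under the posterior $p_{X\mid Z=z}$; but Bayes' rule reweights $p_X$ by the strictly positive Gaussian factor $\exp(-\|z-\cdot\|^2/2\sigma^2)$, so $p_{X\mid Z=z}$ and $p_X$ share null sets, forcing $\langle X,v\rangle$ to be a.s.\ constant under $p_X$ as well and contradicting the non-degeneracy hypothesis. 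Thus $h$ is strictly convex, $\psi_\sigma = \nabla h$ is strictly monotone and therefore one-to-one; the inverse function theorem then supplies a $C^\infty$ inverse on the image $\psi_\sigma(\mathbb{R}^n)$, which simultaneously finishes (2).

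For (3), strict convexity and smoothness of $h$ put us in the Legendre--Fenchel regime, with $\psi_\sigma^{-1} = \nabla h^*$ on $\psi_\sigma(\mathbb{R}^n)$. The key identity, valid whenever $\phi + \tfrac{1}{2}\|\cdot\|^2$ is convex and lower-semicontinuous, is
\[
\operatorname*{arg\,min}_x\bigl\{\phi(x)+\tfrac{1}{2}\|x-z\|^2\bigr\} = \nabla\bigl(\phi+\tfrac{1}{2}\|\cdot\|^2\bigr)^{*}(z),
\]
so I define $\phi_{\mathrm{MMSE}}$ by $\phi_{\mathrm{MMSE}} + \tfrac{1}{2}\|\cdot\|^2 \coloneqq h^*$, which automatically forces $\operatorname{prox}_{\phi_{\mathrm{MMSE}}}(z) = \nabla h(z) = \psi_\sigma(z)$. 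Evaluating $h^*(x)$ at its unique attainer $\bar z = \psi_\sigma^{-1}(x)$ yields $h^*(x) = \langle x,\bar z\rangle - \tfrac{1}{2}\|\bar z\|^2 - \sigma^2\log p_Z(\bar z)$, and subtracting $\tfrac{1}{2}\|x\|^2$ collapses this to the closed form $-\tfrac{1}{2}\|\bar z - x\|^2 + f_Z(\bar z)$ of \cref{eq:gribonval_explicit_mmse_prior} (modulo the $\sigma^2$ factor absorbed by the proximal scaling convention). Extending by $+\infty$ outside $\psi_\sigma(\mathbb{R}^n)$ makes $\phi_{\mathrm{MMSE}}$ proper and lower-semicontinuous; part (4) is then immediate, since on the open set $\psi_\sigma(\mathbb{R}^n)$ the regularizer is an explicit composition of $C^\infty$ maps ($\psi_\sigma^{-1}$, the squared norm, and $f_Z = -\log p_Z$).

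The step I expect to be the main obstacle is the propagation of non-degeneracy from the prior $p_X$ to positive definiteness of $\operatorname{Cov}(X\mid Z=z)$ at \emph{every} $z \in \mathbb{R}^n$ simultaneously; this is where the hypothesis is used essentially, and the mutual-absolute-continuity argument sketched above is the crux. Once strict convexity of $h$ is in hand, everything else reduces to routine convex-analytic bookkeeping via Fenchel duality.
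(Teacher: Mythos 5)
The paper offers no proof of this statement---it is imported verbatim from \cite{gribonval_should} and \cite{gribonval_reconciling}---so there is no in-paper argument to compare against. Your reconstruction follows what is essentially the route of those references and is sound: Tweedie's formula exhibits $\psi_\sigma=\nabla h$ for $h(z)=\tfrac12\|z\|^2+\sigma^2\log p_Z(z)$; the second-order identity $D\psi_\sigma(z)=\sigma^{-2}\operatorname{Cov}(X\mid Z=z)$, combined with mutual absolute continuity of prior and posterior (the Gaussian likelihood is strictly positive), converts non-degeneracy of $p_X$ into $\nabla^2 h(z)\succ 0$ for every $z$; and Fenchel duality with $\phi_{\mathrm{MMSE}}\coloneqq h^*-\tfrac12\|\cdot\|^2$ then delivers the proximal representation and the closed form by evaluating $h^*$ at its unique attainer. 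Two remarks. First, your computation yields $-\tfrac12\|\psi_\sigma^{-1}(x)-x\|^2+\sigma^2 f_Z(\psi_\sigma^{-1}(x))$, and this should not be brushed aside as a ``scaling convention'': it is the correct formula (and the one in the cited references), whereas \eqref{eq:gribonval_explicit_mmse_prior} as printed omits the $\sigma^2$ on $f_Z$ and is consistent with the paper's own \cref{lem:moreau_identity} and \cref{thm:mmse_prior} only when $\sigma=1$; your derivation in fact flags a transcription error in the statement. Second, a minor inaccuracy: extending by $+\infty$ outside $\psi_\sigma(\mathbb{R}^n)$ need not produce a lower-semicontinuous function, since the image is open (by your Jacobian argument) while $h^*$ can remain finite on its boundary (e.g., for compactly supported $p_X$); this does not damage the prox identity, because the unconstrained minimizer $\nabla h(z)$ already lies in the image, but the lsc claim should be dropped or qualified.
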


\section{Main Results}\label{sec:main}
In this section, we present our theoretical contributions. We begin by introducing a new structural representation of the implicit regularizer underlying the MMSE denoiser. This characterization serves as the foundation for deriving novel nonasymptotic convergence guarantees.

\subsection{New Characterization of the MMSE Regularizer}\label[subsection]{section:characterization}
It is known (\cref{thm:gribonval_mmse_result}) that the MMSE denoiser is the proximal map of an infinitely differentiable penalty. Yet beyond this, the structure of the associated penalty \(\phi_{\mathrm{MMSE}}\) remains unclear. The only known explicit expression of $\phi_{\mathrm{MMSE}}$ \eqref{eq:gribonval_explicit_mmse_prior} is cumbersome and offers little analytical insight. In this section, we show that \(\phi_{\mathrm{MMSE}}\) admits an upper Moreau envelope representation in terms of $f_Z$, yielding useful insights, namely $1$-weak convexity, which we will leverage to prove stronger convergence results for PGD using an MMSE denoiser.

For the following, we will assume the setting of the denoising problem \eqref{eq:denoising_problem}, letting $X\sim p_X$, $\epsilon \sim \mathcal{N}(0,\sigma^2I )$ be independent random variables in $\R^n$, $Z=X+\epsilon$, so that $p_Z = p_X*\mathcal{N}(0,\sigma^2I )$. We will let $f_Z\coloneqq -\log p_Z$.

\begin{lemma}\label{lem:moreau_identity}
The negative log marginal distribution from the denoising problem \eqref{eq:denoising_problem} can be written as
\begin{equation}
f_Z = \tfrac{1}{\sigma^2}M_1 \phi_{\mathrm{MMSE}}+ C_{x_0},
\end{equation}
where $\phi_{\mathrm{MMSE}}$ is as in \eqref{eq:gribonval_explicit_mmse_prior} and $C_{x_0}$ takes the form 
\begin{equation}\label{eq:constant}
C_{x_0} \coloneqq  f_Z(x_0) - \tfrac{1}{\sigma^2}M_1 \phi_{\mathrm{MMSE}}(x_0),
\end{equation}
for any choice of $x_0 \in \mathbb{R}^n$.
        
\begin{proof} Pick $x\in \mathbb{R}^n$. By \cref{thm:tweedies} and \cref{thm:gribonval_mmse_result}, 
\begin{align}
x-\operatorname{prox}_{\phi_{\mathrm{MMSE}}}(x)& =-\sigma^2\nabla \log p_Z(x) \nonumber
\\
& = \sigma^2 \nabla f_Z(x).
\end{align}
Combined with \cref{thm:moreau_gradient} yields 
\begin{equation}
    \sigma^2 \nabla f_Z(x) = \nabla M_1 \phi_{\mathrm{MMSE}}(x).
\end{equation}
Integrating gives the result.
\end{proof}
\end{lemma}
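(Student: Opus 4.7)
The plan is to show that both sides have the same gradient everywhere on $\mathbb{R}^n$ and then integrate, fixing the constant of integration by matching values at an arbitrary basepoint $x_0$.

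First, I would rewrite Tweedie's formula (\cref{thm:tweedies}) in the form $x - \psi_\sigma(x) = -\sigma^2 \nabla \log p_Z(x) = \sigma^2 \nabla f_Z(x)$. By \cref{thm:gribonval_mmse_result}, the MMSE denoiser coincides with the proximal operator of $\phi_{\mathrm{MMSE}}$ at parameter $\gamma=1$, so this identity becomes
\begin{equation}
\sigma^2 \nabla f_Z(x) = x - \operatorname{prox}_{\phi_{\mathrm{MMSE}}}(x).
\end{equation}

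Next, I would invoke \cref{thm:moreau_gradient} at $\gamma=1$. Its hypotheses require $\operatorname{prox}_{\phi_{\mathrm{MMSE}}}$ to be single-valued and continuous, both of which follow from \cref{thm:gribonval_mmse_result} (where it is shown to be $C^\infty$). The theorem then yields
\begin{equation}
\nabla M_1 \phi_{\mathrm{MMSE}}(x) = x - \operatorname{prox}_{\phi_{\mathrm{MMSE}}}(x),
\end{equation}
so that $\sigma^2 \nabla f_Z(x) = \nabla M_1 \phi_{\mathrm{MMSE}}(x)$ pointwise on $\mathbb{R}^n$.

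Finally, since $\mathbb{R}^n$ is connected and both $f_Z$ and $M_1 \phi_{\mathrm{MMSE}}$ are differentiable with identical gradients (up to the $1/\sigma^2$ scaling), they differ by an additive constant. Evaluating the difference at the chosen basepoint $x_0$ determines this constant to be $C_{x_0}$ as given in \eqref{eq:constant}. The main obstacle worth a moment's care is verifying that \cref{thm:moreau_gradient} is indeed applicable --- that is, that $\phi_{\mathrm{MMSE}}$ is proper and lower-semicontinuous and its prox is single-valued and continuous --- but these follow directly from \cref{thm:gribonval_mmse_result}, so once those conditions are invoked the argument reduces to the chain above.
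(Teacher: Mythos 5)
Your proposal is correct and follows essentially the same route as the paper: apply Tweedie's formula together with the identification $\psi_\sigma = \operatorname{prox}_{\phi_{\mathrm{MMSE}}}$, match gradients via \cref{thm:moreau_gradient}, and integrate to fix the constant at $x_0$. Your added care in verifying the hypotheses of \cref{thm:moreau_gradient} (single-valuedness and continuity of the prox, from \cref{thm:gribonval_mmse_result}) is a welcome but minor elaboration of what the paper leaves implicit.
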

 
\begin{theorem}\label{thm:mmse_prior}
The implicit regularizer in the MMSE denoiser can be written as 
\begin{equation}\label{eq:mmse_prior_explicit}
\phi_{\mathrm{MMSE}}(x) = \begin{cases} \sigma^2M^{\sigma^2}f_Z (x) - \sigma^2C_{x_0}, & x\in \psi_{\sigma}(\mathbb{R}^n)\\
+\infty, & \text{otherwise},
\end{cases}
\end{equation} where $C_{x_0}$ is defined in \eqref{eq:constant}.
\end{theorem}
 
\begin{proof}
Pick some $x \in \mathbb{R}^n$. By \cref{lem:bernard3},
\begin{equation}
     \phi_{\mathrm{MMSE}}(x) \geq M^{1} M_{1}\phi_{\mathrm{MMSE}}(x),
\end{equation}
which, by the previous result, we can rewrite as 
\begin{align}
\phi_{\mathrm{MMSE}}(x) &\geq  M^1(\sigma^2 f_Z-\sigma^2 C_{x_0})(x) \nonumber \\ &= \sigma^2 M^{\sigma^2}f_Z(x)-\sigma^2 C_{x_0}.
\end{align}
For the reverse inequality, take some $x\in \psi_{\sigma}(\mathbb{R}^n)$. There exists some $z\in \mathbb{R}^n$ so that $x=\psi_{\sigma}(z) = \operatorname{prox}_{ \phi_{\mathrm{MMSE}}}(z)$. The infimum defining $M_1 \phi_{\mathrm{MMSE}}$ is achieved at $x$ (cf.~\eqref{eq:moreau_and_prox}), so that
\begin{align}
\phi_{\mathrm{MMSE}}(x) + \tfrac{1}{2}\|x-z\|^2 &= M_{1} \phi_{\mathrm{MMSE}}(z) \nonumber \\
&= \sigma^2f_Z(z) - \sigma^2C_{x_0},
\end{align}
where the second line holds by \cref{lem:moreau_identity}.
Rearranging, we have 
\begin{align}
 \phi_{\mathrm{MMSE}}(x) &= \sigma^2 \left(f_Z(z) - \tfrac{1}{2\sigma^2}\|x-z\|^2 
\right)- \sigma^2C_{x_0} \nonumber \\
&\leq \sigma^2M^{\sigma^2}f_Z(x) - \sigma^2 C_{x_0}, 
\end{align}
which completes the proof.
\end{proof}
We illustrate this result in \cref{fig:1.1} and \cref{fig:1.2} under a variety of mixture-of-Gaussian and mixture-of-Laplacian priors. An overview of the experimental details is provided in \cref{section:illustration}. 
We conclude with an immediate corollary.

\begin{corollary} \label{cor:wcvx}
The implicit regularizer $\phi_{\mathrm{MMSE}}$ is $1$-weakly convex on its domain $\mathrm{dom}(\phi_{\mathrm{MMSE}}) =  \psi_{\sigma}(\mathbb{R}^n)$.
\end{corollary} 
\begin{proof}
By~\cref{thm:mmse_prior}, $\phi_{\mathrm{MMSE}}$ equals $\sigma^2M^{\sigma^2}f_Z$ on $\psi_\sigma(\mathbb{R}^n)$. By \cref{lem:bernard3}, $M^{\sigma^2}f_Z$ is $\tfrac{1}{\sigma^2}$-weakly convex. Hence, $\sigma^2M^{\sigma^2}f_Z$ is $1$-weakly convex. The result follows.
\end{proof}

\begin{figure*}[htb!]
    \centering
    \includegraphics[width=\linewidth]{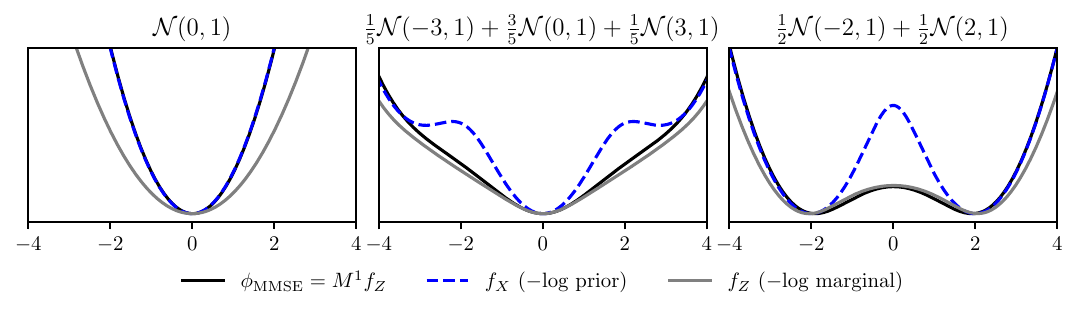}
    \centering
    \includegraphics[width=\linewidth]{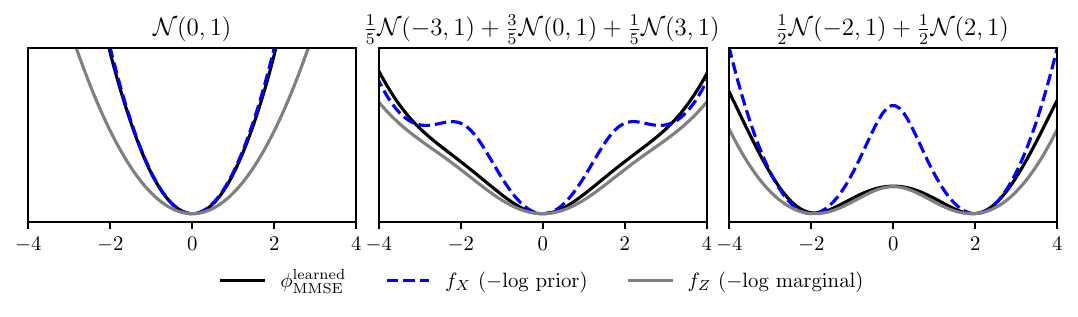}
    \caption{Top: Calculated implicit regularizer in the MMSE denoiser ($\phi_{\mathrm{MMSE}}$), negative log-marginal $f_Z$, and negative log-prior $f_X$ for three mixture-of-Gaussian priors under unit Gaussian noise. Bottom: The learned regularizer $\phi^{\mathrm{learned}}_{\mathrm{MMSE}}$, along with the same reference curves. Details in \cref{section:illustration}.}
    \label{fig:1.1}
\end{figure*}
\begin{figure*}[htb!]
    \centering
    \includegraphics[width=\linewidth]{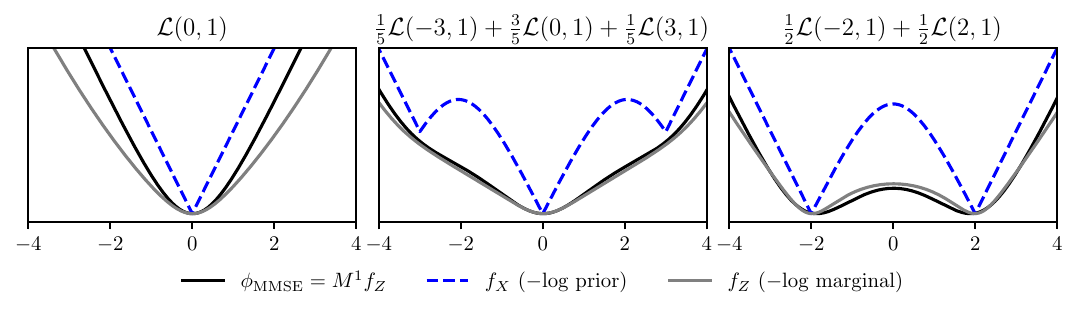}
    \centering
    \includegraphics[width=\linewidth]{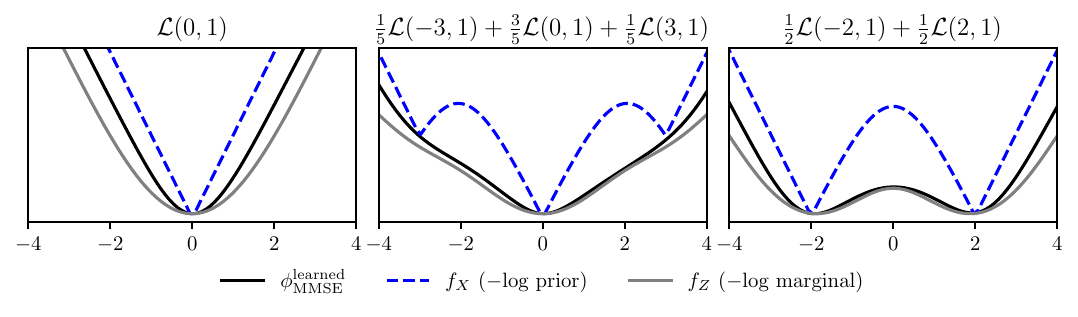}
    \caption{Top: Calculated implicit regularizer in the MMSE denoiser ($\phi_{\mathrm{MMSE}}$), negative log-marginal $f_Z$, and negative log-prior $f_X$ for three mixture-of-Laplacian priors under unit Gaussian noise. Bottom: The learned regularizer $\phi^{\mathrm{learned}}_{\mathrm{MMSE}}$, along with the same reference curves. Details in \cref{section:illustration}.} 
    \label{fig:1.2}
\end{figure*}

\subsection{Proximal Gradient Descent Convergence Results}

Consider the iteration
\begin{align}\label{eq:MMSE_PGD}
    x_{k+1} = \psi_\sigma(x_k -\nabla f(x_k)),
\end{align}
where $\psi_\sigma$ is the MMSE denoiser under Gaussian noise of variance $\sigma^2$ and $f$ is an $L$-smooth data-fidelity term. The step size is fixed because it is implicit in the MMSE denoiser. The associated objective for these iterates is
\begin{align}\label{eq:MMSE_PGD_objective}
    F(x)\coloneqq f(x) + \phi_{\mathrm{MMSE}}(x),
\end{align}
where $\phi_{\mathrm{MMSE}}$ is the implicit regularizer in the MMSE estimator, discussed in \cref{section:characterization}. 

When the objective $F$ is smooth, the map $x\mapsto \nabla F(x)$ is continuous. Therefore, measuring convergence via the gradient norm $\|\nabla F(x_k)\|$ is natural as small gradient norms imply closeness to a stationary point. For nonconvex problems, $\|\nabla F(x_k)\|$ may oscillate even as it trends downward, so nonasymptotic convergence is typically stated in terms of the best iterate up until the current iteration~\cite{attouch_convergence}. Since $f$ is smooth and $\phi_{\mathrm{MMSE}}$ is $C^\infty$ on its domain (\cref{thm:gribonval_mmse_result}), the objective $F$ is $C^\infty$ on its domain. We measure convergence via
$\min_{1\leq i \leq K} \|\nabla F(x_i)\|$ at each iteration $K$. We now state our main convergence result. As we shall see, once weak convexity is established, sublinear convergence follows immediately by specializing existing theory to our context.
\begin{theorem}[Convergence of \cref{eq:MMSE_PGD}]\label{thm:PGD_MMSE_convergence}
Let $f$ be proper, bounded from below and $L$-smooth with $L<1$. Then for $F$ in~\cref{eq:MMSE_PGD_objective} and $x_k$ as in~\eqref{eq:MMSE_PGD}, the following hold:
\begin{enumerate} [label=(\roman*)]
    \item The sequence $(F(x_k))$ is non-increasing and converges.
    \item \label{two} The sequence has finite squared length, i.e.,
    $\sum_{k=0}^{\infty} \|x_{k+1} - x_k\|^2 < \infty$,
   and the best-step residual satisfies
    \begin{equation}
        \min_{1\leq i \leq K} \|x_{i+1} - x_i\| = \mathcal{O}\left(\tfrac{1}{\sqrt{K}}\right).
    \end{equation}
    \item All cluster points of $x_k$ are stationary points of $F$.
\end{enumerate}
\end{theorem}

\begin{proof}
    We have that $\sigma^2 M^{\sigma^2} f_Z-\sigma^2C_{x_0}$ is a globally-defined $1$-weakly convex function that agrees with $\phi_{\mathrm{MMSE}}$ on $\psi_\sigma(\mathbb{R}^n)$ by~\cref{thm:mmse_prior} and~\cref{lem:bernard3}.  Further, 
    \begin{align}
        \sigma^2M^{\sigma^2}f_Z - \sigma^2C_{x_0} 
        &\geq \sigma^2f_Z - \sigma^2C_{x_0}  \nonumber \\
        &\geq \sigma^2\tfrac{n}{2}\log (2\pi\sigma^2) - \sigma^2C_{x_0}, 
        \end{align}
        so $\phi_{\mathrm{MMSE}}$ is bounded below. 
    Next, each iterate of~\eqref{eq:MMSE_PGD} clearly satisfies $x_k \in \psi_\sigma(\mathbb{R}^n)$ for $k \geq 1$.
    Define \begin{align}
    \widetilde{\phi} =\sigma^2 M^{\sigma^2}f_Z-\sigma^2C_{x_0}.    
    \end{align}
    By~\cref{thm:mmse_prior} and \cref{lem:bernard3}, $\widetilde{\phi}$ is globally $1$-weakly convex and agrees with $\phi_{\mathrm{MMSE}}$ on $\psi_\sigma(\mathbb{R}^n)$ and hence on the iterates $x_k$ for $k \geq 1$. Applying the descent argument from~\cite[Theorem 1]{hurault_convergent} with $\tau=1$, $M=1$, and $L_f < 1$ yields the result.
    
\end{proof}
\begin{corollary}\label{cor:conv}
    Under the assumptions of \cref{thm:PGD_MMSE_convergence}, 
    \begin{align}
        \min_{1 \leq i \leq K} \|\nabla F(x_i)\| = \mathcal{O} \left( \tfrac{1}{\sqrt{K}} \right).
    \end{align}
\end{corollary}
\begin{proof}
From~\cref{thm:gribonval_mmse_result} and~\cref{thm:mmse_prior}, 
    \begin{align}x_{k+1} = \argmin_{z} G_k(z),\end{align}
    where
    \begin{align}G_k(z) & \coloneqq \sigma^2(M^{\sigma^2}f_Z(z) -C_{x_0})\nonumber \\  &\quad + \tfrac{1}{2}\|z-(x_k - \nabla f(x_k))\|^2.\end{align} By~\cref{lem:bernard3}, $\sigma^2M^{\sigma^2}f_Z$ is $1$-weakly convex. Hence, $G_k$ is convex and we can conclude that its minimizer satisfies first-order optimality, i.e.,
    $\nabla G_k(x_{k+1}) = 0$. We can write this as 
        $\nabla \phi_{\mathrm{MMSE}}(x_{k+1}) + x_{k+1} - (x_k - \nabla f(x_k)) = 0,$
    because $\phi_{\mathrm{MMSE}}(z)$ and $\sigma^2M^{\sigma^2}f_Z(z) -\sigma^2C_{x_0}$ agree on the open set $\psi_{\sigma}(\mathbb{R}^n)$, and $x_{k+1} \in \psi_{\sigma}(\mathbb{R}^n)$.  Therefore,
    \begin{align}\label{eq:gradient_calculation}
        \nabla F(x_{k+1}) = x_k - x_{k+1} + \nabla f(x_{k+1}) - \nabla f(x_k).
    \end{align}
    From this, we have that
    \begin{align}
        \|\nabla F(x_{k+1})\| & \leq \|x_k - x_{k+1}\| + \|\nabla f(x_{k+1})-\nabla f(x_k)\| \nonumber\\
        &\leq (1+L)\|x_{k+1}-x_k\| < 2 \|x_{k+1}-x_k\|. 
        \end{align}
        The result then follows from \cref{thm:PGD_MMSE_convergence}~\ref{two}. 
\end{proof}

\begin{remark}
The requirement $L<1$ may seem restrictive. However, any $L$-smooth data-fidelity term can be scaled appropriately to meet this condition. For example, in our computed tomography experiment (\cref{section:CT}), the forward operator is a discretized Radon transform, for which $f$ is $L$-smooth, $L \approx 140$. We simply scale the data-fidelity term to obtain empirical results that verify our theoretical convergence rate.
\end{remark}
 
\begin{remark}
The \emph{asymptotic} convergence of PGD using an MMSE denoiser has previously been established for a range of step sizes $\gamma>0$. In particular,~\cite{xu_provable} showed that PnP-MMSE converges for any $\gamma \leq 1/L$. Though the MMSE denoiser can be written as $\operatorname{prox}_{\phi_{\mathrm{MMSE}}}$, we note that $\operatorname{prox}_{\gamma \phi_{\mathrm{MMSE}}}$ cannot be evaluated for any $\gamma \neq 1$. Consequently, any change in the step size can be realized by a rescaling of the data-fidelity term. For this reason, we assume an implicit step size of $1$ and scale the Lipschitz constant of the data-fidelity term accordingly. 
\end{remark}

\section{Experiments} \label{Experiments}

In this section, we numerically verify the theoretical results presented in \cref{sec:main}. The code to reproduce our experiments is publicly available on GitHub.\footnote{\url{https://github.com/sparsity-group/pnpmmse}} All training and testing were performed on an NVIDIA RTX PRO 6000 Blackwell \mbox{Max-Q} Workstation Edition GPU using Ubuntu 24.04.

Central to our experiments is the task of estimating the MMSE denoiser of a given distribution under Gaussian noise. Following the work of~\cite{fang_whats}, we choose to model the MMSE denoiser as the gradient of a softplus deep neural network (DNN). We found that this architecture performed better than rectified linear unit (ReLU) alternatives. We also incorporate skip connections at each layer, as
they have been shown to aid in image denoising tasks~\cite{mao_image}. Finally, by Tweedie's formula (\cref{thm:tweedies}), the \textit{true} MMSE denoiser can be written as the gradient of $\tfrac{1}{2}\|\cdot\|^2 + \sigma^2 \log p_Z$, where $p_Z$ is the marginal distribution under Gaussian noise. For this reason, our choice of modeling the MMSE denoiser as the gradient of a DNN is well-motivated.

\begin{table*}[!t]
 
\centering
\caption{DNN architectural details for inverse problems experiments (\cref{section:deblurring} and \cref{section:CT}). $H$ denotes the hidden dimension and $L$ denotes the number of layers.}
\label{tab:architecture}
\begin{tabular}{llll}
\toprule
& \textbf{} & \textbf{Gaussian Deblurring} & \textbf{Computed Tomography} \\
\midrule
\textbf{Input size} & & $28\times28$ & $512\times512$ \\
\textbf{Dimensions} & & $H \times 28\times28,\ H\times14\times14,\ H \times 14\times14,$ & $H\times 512\times512,\ H \times 256\times256,\ H \times 256\times256,\ H \times 128\times128$ \\
& & $H \times 7\times7,\ H \times 7^2 \times 1 \times 1,\ 64 \times 1 \times 1$ & $H \times 128\times128,\ H \times 64\times64,\ H \times 64\times64,\ H \times 32\times32,$ \\
& & & $H \times 16\times16,\ 64 \times 1 \times 1,\ 64 \times 1 \times 1$ \\
$L$ & & $6$ & $11$ \\
$H$ & & $64$ & $256$ \\
\textbf{Direct connections} ($W_k$) & & 4 Conv2d ($3\times3$ kernel), 2 Linear & 9 Conv2d ($3\times3$ kernel), 1 Conv2d ($16\times16$ kernel), 1 Linear \\
\textbf{Skip connections} ($A_k$) & & 3 Conv2d, 1 Linear & 8 Conv2d ($3\times3$ kernel), 1 Conv2d ($16\times16$ kernel) \\
\bottomrule
\end{tabular}
\end{table*}

\begin{figure}[!htbp]
    \centering
    \includegraphics[width=0.9\linewidth]{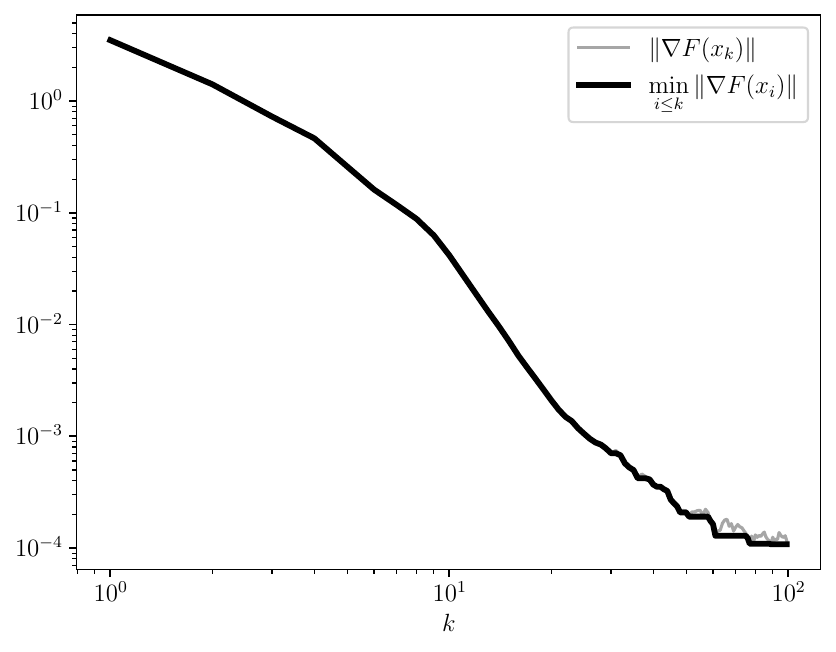}
    \caption{Best-iterate gradient norm over $100$ iterations of PnP-PGD with an MMSE denoiser for the MNIST Gaussian deblurring experiment in \cref{section:deblurring}, plotted on a log-log scale. The quantity decays sublinearly, consistent with a $\mathcal{O}(1/\sqrt{K})$ rate.}
    \label{fig:1.4}
\end{figure}

\subsection{Numerical Illustration of \cref{thm:mmse_prior}}\label[subsection]{section:illustration}

\begin{figure}[t] 
    \centering
    \includegraphics[width=0.9\linewidth]{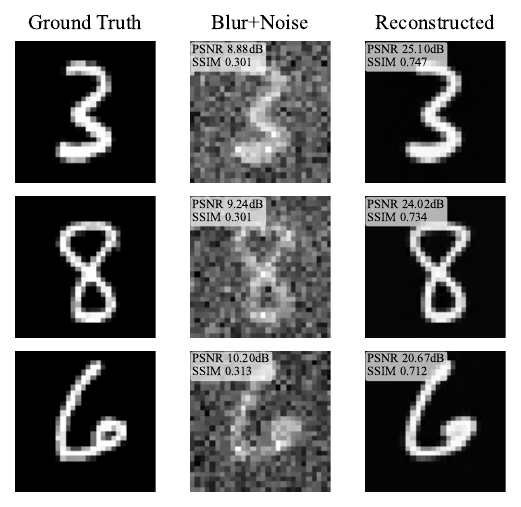}
   \caption{PnP-PGD reconstruction results on three selections from the MNIST dataset under Gaussian blur (\cref{section:deblurring}). From left to right: clean reference, blurred and noisy input, and reconstruction after 100 iterations. PSNR gains are roughly $10$--$16$~dB across all three examples.}
    \label{fig:1.3}
\end{figure}

We first consider the denoising problem \cref{eq:denoising_problem} with simple 1-dimensional mixture-of-Gaussian and mixture-of-Laplacian priors, denoted by $p_X$, with additive zero-mean, unit-variance Gaussian noise $\epsilon$. In each case, $p_Z$ denotes the marginal distribution of $z$. We now describe the DNN architecture used to learn the MMSE denoiser. Let
\begin{align}
    h_1 &\coloneqq \sigma(W_1x), \quad W_1 \in \mathbb{R}^H, \nonumber \\
    h_{k} &\coloneqq \sigma(W_{k}h_{k-1} + a_{k-1} x + b_{k-1}),\quad k = 2, \ldots, L, 
\end{align}
and $g_\theta(x) \coloneqq w_{\mathrm{out}}^\T h_L + a_{\mathrm{out}}x + b_\mathrm{out}$, where $W_k \in \mathbb{R}^{H \times H}$ are the weights, $a_k, b_k \in \mathbb{R}^H$ are the skip connections and biases, and $w_{\mathrm{out}},a_{\mathrm{out}},b_{\mathrm{out}} \in \mathbb{R}$ are the output weights and biases. 

We use a softplus activation function
\begin{align}\label{eq:softplus}
    \sigma(x)\coloneqq \frac{1}{\beta}\log(1+\exp(\beta x)),
\end{align}
with $\beta = 10$. There are $L=4$ hidden layers, with a hidden dimension of $H=50$. We model our MMSE denoiser as the gradient of the DNN, i.e., $\psi_\theta \coloneqq \nabla g_\theta$. To learn the denoiser, we optimize the objective
\begin{align}\label{eq:MSE_loss}
\mathcal{L}(\theta)\coloneqq \frac{1}{B}\sum_{i=1}^B \|\psi_{\theta}(z_i) - x_i\|^2,\end{align}
with a batch size $B=2000$, $x_i \sim p_X$, and $z_i = x_i + \epsilon_i$, $\epsilon_i \sim \mathcal{N}(0,1)$. We consider the cases where $p_X$ is either a mixture-of-Gaussian or mixture-of-Laplacian distribution. We trained the DNNs using the Adam optimizer for $500$ epochs with a learning rate $10^{-3}$ followed by $500$ epochs with a learning rate of $10^{-4}$.

\begin{figure}[!htbp]
    \centering
    \includegraphics[width  = 0.9\linewidth]{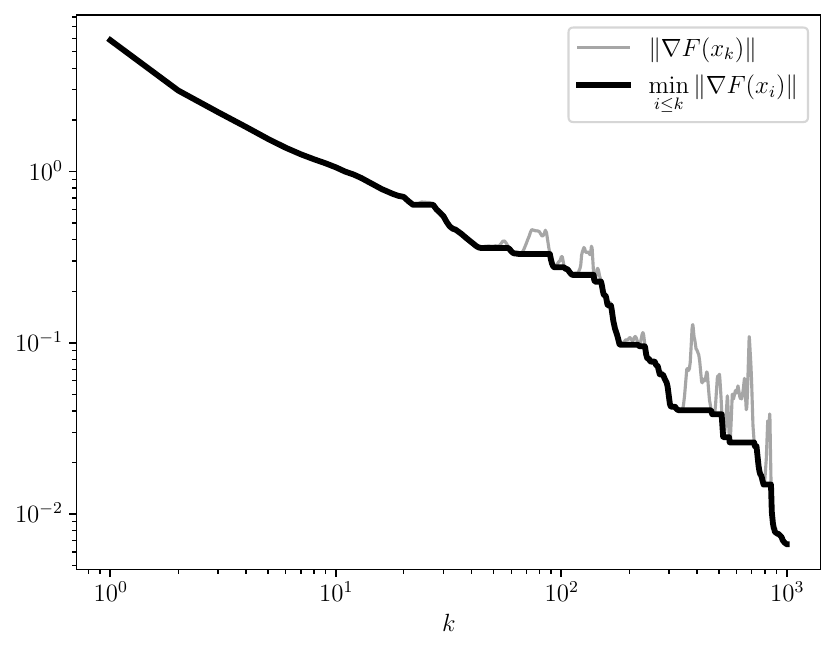}
    \caption{Best-iterate gradient norm over $\num{1000}$ iterations of PnP-PGD with an MMSE denoiser for the MayoCT computed tomography experiment in \cref{section:CT}, plotted on a log-log scale. The quantity decays sublinearly, consistent with a $\mathcal{O}(1/\sqrt{K})$ rate.}
    \label{fig:1.6}
\end{figure}

As discussed in \cref{sec:main}, the MMSE denoiser $\psi_\sigma$ can be written as $\operatorname{prox}_{\phi_{\mathrm{MMSE}}}$, where $\phi_{\mathrm{MMSE}}$ is the implicit regularizer. To approximate this, we take advantage of~\cite[Eq.~(8)]{gribonval_characterization} and write
\begin{align} \label{eq:phi_mmse_est}
    \phi^{\mathrm{learned}}_{\mathrm{MMSE}}(x) \coloneqq \langle \psi_\theta^{-1}(x),x\rangle - g_\theta(\psi_{\theta}^{-1}(x)) - \tfrac{1}{2}x^2,
\end{align}
where we compute $\psi_\theta^{-1}$ numerically via least-squares. 
When $\psi_\theta$ is the \textit{bona fide} MMSE denoiser, $\phi^{\mathrm{learned}}_{\mathrm{MMSE}}$ is the \textit{exact} implicit regularizer~\cite{gribonval_characterization} up to a constant.

\begin{figure*}[!t]
    \centering
    \includegraphics[width=\linewidth]{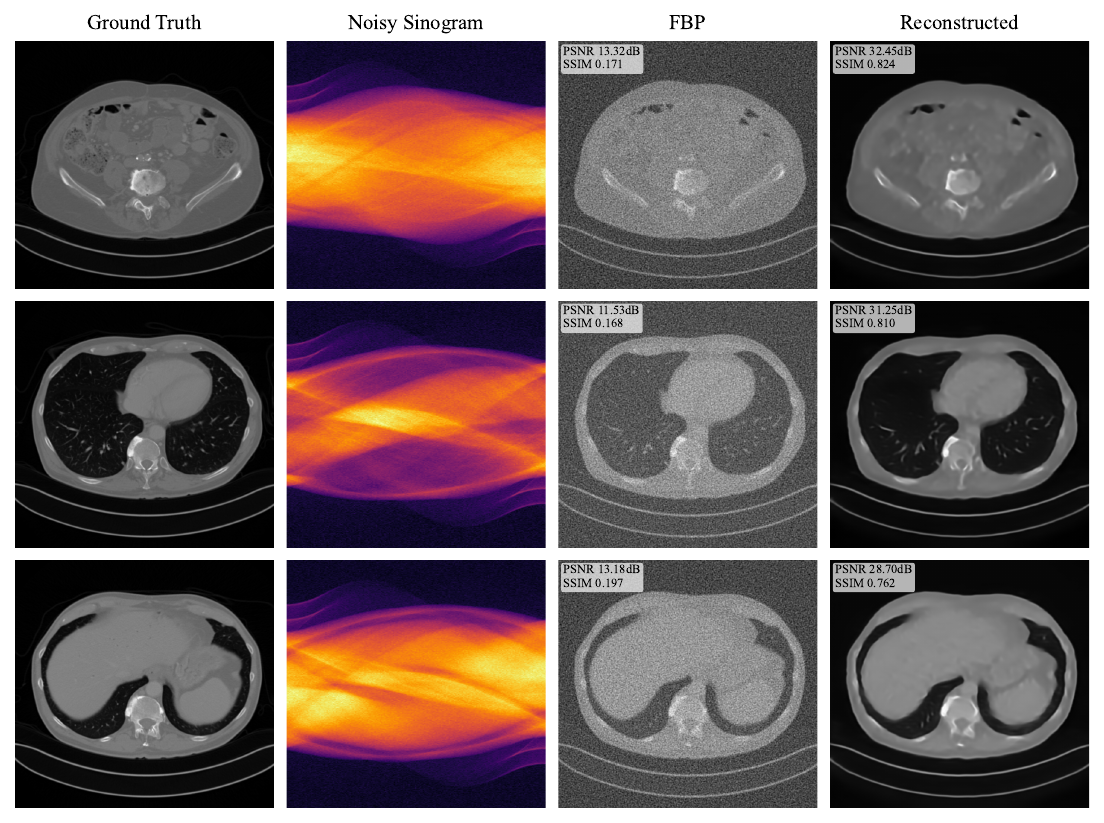}
    \caption{PnP-PGD reconstruction results on three axial slices from the MayoCT dataset. From left to right: clean reference, noisy sinogram, filtered backprojection of the noisy sinogram, and PnP-PGD output using an MMSE denoiser with early stopping. PSNR and SSIM are reported for the FBP and reconstructed images. Reconstructions achieve PSNR gains of roughly 15–20 dB over the FBP baseline, with SSIM improvements around $0.65$. 
}
    \label{fig:1.7}
\end{figure*}

Finally, since we can find the \emph{true} MMSE denoiser $\psi_{\sigma}$ via Tweedie's formula (\cref{thm:tweedies}), we compute the corresponding implicit regularizer $\phi_{\mathrm{MMSE}}$. From \cref{eq:moreau_and_prox} and \cref{lem:moreau_identity}, we have the closed-form expression
\begin{align}
    \phi_{\mathrm{MMSE}}(x) &= f_Z(\psi_{\sigma}^{-1}(x)) \nonumber \\
    &\qquad\qquad -\tfrac{1}{2}(x-\psi^{-1}_{\sigma} (x))^2-C_{x_0}, \label{eq:explicit_calculation}
\end{align}
since $\sigma^2 = 1$ in this setting. Again, $\psi_{\sigma}$ is inverted numerically via least-squares.

In \cref{fig:1.1,fig:1.2} we compare the learned vs.\ calculated implicit regularizers for three mixture-of-Gaussian and three mixture-of-Laplacian priors, along with the reference curves $f_X \coloneqq- \log p_X$ and $f_Z \coloneqq -\log\left(p_X*\mathcal{N}(0,1) \right)$. Since the purpose of this experiment is visual inspection, we ignore the constant $C_{x_0}$ in \cref{lem:moreau_identity} and shift the curves vertically so they can be easily compared.

\subsection{DNN Architecture for Inverse Problems}
\label{section:architecture}

We now detail the DNN architecture for the inverse problems experiments that appear in \cref{section:deblurring} (Gaussian deblurring) and \cref{section:CT} (Computed Tomography). In both cases, we define $h_1 \coloneqq \sigma(W_1x)$, and let 
\begin{align}
    h_{k} \coloneqq \sigma(W_{k}h_{k-1} + A_{k-1} \mathcal{I}_k(x) + b_k),
\end{align}
where $k = 2, \ldots, (L-1)$ and $\mathcal{I}_k(\cdot)$ denotes bilinear interpolation of $x$ so that the dimensions match for matrix-vector multiplication. We let the scalar output be $g_\theta(x)$, defined as
\begin{align}
    w_{\mathrm{out}}^\T \sigma(W_L \operatorname{vec}(h_{L-1}) + A_L\operatorname{vec}(\mathcal{I}_L(x)) +  b_L).
\end{align}
The DNNs use a softplus activation \cref{eq:softplus} with $\beta = 10$. Each layer is either a convolutional or fully connected layer. There is a skip connection at every layer except the first and last, and we provide the exact architectural details in \cref{tab:architecture}. We train with the mean-squared error loss \cref{eq:MSE_loss} with $z_i = x_i + \epsilon_i$ for zero-mean Gaussian noise $\epsilon_i$ with variance $\sigma^2 = 0.04$.

\subsection{Gaussian Deblurring}\label{section:deblurring}
We next consider the Gaussian deblurring inverse problem \cref{eq:inverse_problem}, where $x$ is randomly sampled from the MNIST dataset of handwritten digits~\cite{lecun_mnist}. The forward model $A$ is a Gaussian blurring operator, computed by circular convolution with a $(3\times 3)$ Gaussian kernel with $\sigma^2=1$. The kernel is normalized to have unit mass. We normalize the blur operator to have operator norm less than 1 to satisfy the assumptions in \cref{thm:PGD_MMSE_convergence}.

We use the DNN architecture described in \cref{section:architecture},  trained using the Adam optimizer with a cosine annealing learning rate schedule. The network is trained with batches of $500$ for $\num{50000}$ epochs with a final learning rate of $3\times10^{-5}$. We let $y=Ax+\epsilon$, with $\epsilon$ zero-mean Gaussian noise with variance $\sigma^2 = 0.04$. We use the data-fidelity term $f(x) \coloneqq\tfrac{1}{2}\|Ax-y\|^2$, and plot $\min_{i \leq K}\|\nabla F(x_i)\|$ for the first $100$ iterations in \cref{fig:1.4}, which we computed via \cref{eq:gradient_calculation}.  We display the reconstruction results in \cref{fig:1.3}. Observe that there is clear sublinear decay, consistent with our theoretical results in \cref{thm:PGD_MMSE_convergence} and \cref{cor:conv}.

\subsection{Computed Tomography} \label{section:CT}
We now consider the computed tomography inverse problem. The forward operator $A$ is given by a parallel beam Radon transform discretized via the Operator Discretization Library~(ODL)~\cite{adler_operator}. We scale the operator by $1/(1+\|A\|_{\mathrm{op}})$ in accordance with \cref{thm:PGD_MMSE_convergence}. We use 200 projection angles, uniformly spaced over $[0,\pi)$, with 400 detector bins, implemented using the Astra toolbox~\cite{van_fast} with CUDA acceleration. 

We use the DNN architecture described in \cref{section:architecture},  trained using the Adam optimizer with a cosine annealing learning rate schedule. The network is trained with batches of $64$ for $\num{40000}$ epochs with a final learning rate of $10^{-6}$. We use training and testing data from the MayoCT dataset~\cite{mcollough_low} (resolution of $512\times512$, pixel values ranging from 0 to 1). We let $y=Ax+\epsilon$, with $\epsilon$ zero-mean Gaussian noise with variance $\sigma^2 = 0.04$. We use the data-fidelity term $f(x) \coloneqq\tfrac{1}{2}\|Ax-y\|^2$ and plot $\min_{i \leq K}\|\nabla F(x_i)\|$ for the first $\num{1000}$ iterations in \cref{fig:1.6}, computed via \cref{eq:gradient_calculation}.  We display the reconstruction results for three slices of the MayoCT dataset in \cref{fig:1.7}.  There is clear sublinear decay, consistent with our theoretical result in \cref{thm:PGD_MMSE_convergence} and \cref{cor:conv}.

\section{Conclusion}
We presented a novel representation of the MMSE denoiser’s implicit regularizer in \cref{thm:mmse_prior} and used it to derive nonasymptotic convergence results for PnP-PGD using an MMSE denoiser in \cref{thm:PGD_MMSE_convergence} and \cref{cor:conv}. Our analysis is strictly tied to the Gaussian-noise MMSE setting.  Tweedie’s formula, for example, is not the same under other noise models~\cite{efron_tweedies}, so we suspect this flavor of result is unique to the Gaussian setting. However, we expect analogous nonasymptotic guarantees to be attainable for other PnP schemes that use MMSE denoisers under Gaussian noise, such as PnP-ADMM or the proximal point method. Exploration of these extensions constitutes future work.

\section*{Acknowledgment}
The authors would like to thank Stanislas Ducotterd and Guillaume Lauga for helpful feedback on this work.

\bibliographystyle{IEEEtranS} 
\bibliography{ref}

\appendices

\end{document}